\DeclareMathOperator{\esssup}{ess \, \, sup}
\DeclareMathOperator{\essinf}{ess \, \, inf}
\def \e {\varepsilon}
\newcommand{\R}{{\mathbb R}}
\newtheorem{theorem}{Theorem}[section]
\newtheorem{lemma}[theorem]{Lemma}
\newtheorem{definition}[theorem]{Definition}
\newtheorem{cor}[theorem]{Corollary}
\newtheorem{prop}[theorem]{Proposition}
\theoremstyle{remark}
\newtheorem{rem}[theorem]{Remark}
\title{Large time behavior and Lyapunov functionals for a nonlocal differential equation}
\author{Danielle Hilhorst   \thanks{\texttt{CNRS, Laboratoire de Math\'{e}matique d'Orsay, Analyse Num\'{e}rique et EDP,
	 Universit\'e 	de Paris-Sud, Universit\'e Paris-Saclay, F-91405 Orsay Cedex, France}}, \
	 Philippe Lauren\c cot \thanks{\texttt{Institut de Math\'ematiques de Toulouse, UMR 5219, Universit\'e 
	 de Toulouse, CNRS, F-31062 Toulouse Cedex 9, France}}, \ 
	 Thanh-Nam Nguyen \thanks{\texttt{National Institute for Mathematical Sciences,
	 		70, Yuseong-daero 1689 beon-gil, Yuseong-gu, Daejeon, 34047, Korea}}
	}
\date{\today} 
\begin{document}

\maketitle

\begin{abstract}
A new approach is used to describe the large time behavior of the nonlocal differential equation initially studied in \cite{thanhnam}. Our approach is based upon the existence of infinitely many Lyapunov functionals and allows us to extend the analysis performed in \cite{thanhnam}.
	
	\medskip 

\noindent 2010 {\it Mathematics Subject Classification.}  45K05; 35B40; 35R09. 
	
\noindent {\it Key words and phrases.} Nonlocal differential equation, $\omega$-limit set, large time behavior, convergence to steady states.
\end{abstract}

\bigskip

\section{Introduction}

Let $\Omega$ be an open bounded subset of  $\R^N$,  $N \ge 1$, and consider the following nonlocal differential equation
\begin{equation*}\label{chap3:3chapchaporiginal:eq}
(P) \ \ \ \left\{ 
\begin{aligned}
&u_t =  u^2(1-u)-u(1-u) \,\frac{\displaystyle\int_\Omega u^2(1-u)\ dx}{\displaystyle\int_\Omega u(1-u)\ dx}\ ,   &    & \quad \quad x\in \Omega,\  t \ge 0 \ ,
\\
&u(x,0)=u_0(x)\ , & & \quad \quad x\in \Omega\ ,
\end{aligned}
\right.
\end{equation*}
which was proposed by M.~Nagayama \cite{nagayama} to describe bubble motion with chemical reaction when a volume constraint is included. The initial condition $u_0$ is here a bounded function in $L^\infty(\Omega)$. Note that Problem $(P)$ is of bistable type since it can be written in the form
$$
u_t=u(1-u)(u-\lambda(t)),
$$
where \begin{equation}
\lambda(t):=\frac{\displaystyle\int_\Omega u^2(1-u)(x,t)\ dx}{\displaystyle\int_\Omega u(1-u)(x,t)\ dx} \ . \label{lambda}
\end{equation}

\medskip

A general form of Problem $(P)$ is actually studied in \cite{thanhnam} where the well-posedness and the large time behavior of solutions of $(P)$ are investigated. In particular, the structure of the $\omega$-limit sets of solutions of $(P)$ is described in \cite{thanhnam} with the help of the rearrangement theory. Restricting our attention to the specific Problem $(P)$ given above, the aim of this paper is to provide additional information on the $\omega$-limit sets with an alternative approach. Our approach is actually based upon the existence of infinitely many Lyapunov functionals, from which we deduce the limit of the nonlocal term and hence the $\omega$-limit set.

As in \cite{thanhnam}, we suppose that the initial condition $u_0$ satisfies one of the following hypotheses:

\begin{itemize}
\item[$\bf (H_1)$] \,
$u_0 \in L^\infty(\Omega)$, $u_0(x) \ge 1 \mbox{~~for a.e.~~}x\in\Omega,  \mbox{~~and~~} u_0 \not\equiv 1.$

\item[$\bf (H_2)$] \, 
$u_0 \in L^\infty(\Omega)$, $0 \le  u_0(x) \le 1 \mbox{~~for a.e.~~}x\in\Omega,  \mbox{~~and~~} u_0(1-u_0) \not\equiv 0.$

\item[$\bf (H_3)$] \, 
$u_0 \in L^\infty(\Omega)$, $u_0(x) \le 0 \mbox{~~for a.e.~~}x\in\Omega,  \mbox{~~and~~} u_0 \not\equiv 0.$
\end{itemize}

In \cite{thanhnam}, when $u_0$ satisfies either $\bf (H_1)$ or $\bf (H_3)$, the solution $u$ of $(P)$ is shown to converge to a step function. The first contribution of this paper is to identify this function in terms of the initial condition $u_0$. We obtain a less precise result when $u_0$ satisfies $\bf (H_2)$ but complete the analysis performed in \cite{thanhnam} in that case. Besides these qualitative results we also identify an infinite family of Lyapunov functionals for $(P)$.

\bigskip

For further use, we define  
$$
f(z) :=z^2(1-z)\ , \qquad  g(z)=z(1-z)\ , \qquad z\in\mathbb{R}\ .
$$ 
Throughout the paper we denote the Lebesgue measure of a measurable set $A \subset \Omega$ by $|A|$. 

The paper is organized as follows. In Section~\ref{sec2}, we recall some results from \cite{thanhnam} dealing with the well-posedness of problem $(P)$ as well as with the existence of invariant sets. In Section~\ref{sec3}, we show that problem $(P)$ possesses infinitely many Lyapunov functionals, and use this property to study the limit of the nonlocal term.
Finally, in Section~\ref{sec4}, we characterize the $\omega$-limit set for initial data satisfying either $\bf (H_1)$ or $\bf (H_3)$, and improve the outcome of \cite{thanhnam} when $\bf (H_2)$ holds.

\section{Well-posedness and $\omega$-limit sets}\label{sec2}

We recall some results from \cite{thanhnam} and first make precise the notion of solution to $(P)$ to be used later.

\begin{definition}[\cite{thanhnam}]\label{thedefinitionofsolution}
Let $0<T \le \infty$. A function $u\in C^1([0,T);L^\infty(\Omega))$ is called a solution of Problem $(P)$ on $[0,T)$ if  the following three properties hold:
\begin{enumerate}[label=\emph{(\roman*)}]
\item $u(0)=u_0$,
\item $\displaystyle \int_\Omega g(u(x,t))\ dx \neq 0 \mbox{~~for all~~} t\in[0, T)$,
\item $u_t(x,t) = f(u(x,t)) - \lambda(t) g(u(x,t))$ for a.e. $x\in\Omega$ and all $t\in[0, T)$,
where $\lambda(t)$ is defined in \eqref{lambda}.

\end{enumerate}
\end{definition}

We note that solutions of $(P)$ on $[0,T)$ satisfy the mass conservation property:
\begin{equation}
\int_\Omega u(x,t)\,dx=\int_\Omega u_0(x)\,dx\ , \quad t \in [0,T)\ . \label{massconservation}
\end{equation} 

We summarize the well-posedness of $(P)$ in the next result.

\begin{prop}[\cite{thanhnam}] \label{thm:existence:boundedness}
Assume that $\bf (H_i)$ holds for some $i=1,2,3$. Problem $(P)$ possesses a global solution $u \in C^1([0, \infty); L^\infty(\Omega))$.
Moreover:
\begin{enumerate}[label=\emph{(\roman*)}]
\item If  $\bf (H_1)$ holds, then for all $t \ge 0$,
\begin{equation}\label{ine:h1}
1 \le u(x,t) \le \esssup_{\Omega}u_0 \mbox{~~a.e. in~~} \Omega.
\end{equation}
\item If $\bf (H_2)$ holds, then for all $t \ge 0$,
\begin{equation}\label{ine:h2}
0 \le u(x, t) \le 1\mbox{~~a.e. in~~}\Omega.
\end{equation}
\item If  $\bf (H_3)$ holds, then for all $t \ge 0$,
\begin{equation}\label{ine:h3}
\essinf_{\Omega} u_0 \le u(x,t) \le 0 \mbox{~~a.e. in~~} \Omega.
\end{equation}
\end{enumerate}
\end{prop}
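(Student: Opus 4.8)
The plan is to treat $(P)$ as an ordinary differential equation $u' = F(u)$ in the Banach space $L^\infty(\Omega)$, where $F(v) = f(v) - \lambda(v)\, g(v)$ and $\lambda(v) = \int_\Omega f(v)\,dx \, / \int_\Omega g(v)\,dx$. First I would establish local well-posedness on the open set $U := \{v \in L^\infty(\Omega) : \int_\Omega g(v)\,dx \neq 0\}$: the Nemytskii operators associated with the polynomials $f,g$ are locally Lipschitz on bounded subsets of $L^\infty(\Omega)$, and the scalar factor $\lambda(\cdot)$ is locally Lipschitz on $U$, so that $F$ is locally Lipschitz on $U$. Since each of $\bf (H_1)$--$\bf (H_3)$ forces $u_0 \in U$ -- indeed $\int_\Omega g(u_0)\,dx$ is strictly negative under $\bf (H_1)$ or $\bf (H_3)$ and strictly positive under $\bf (H_2)$ -- the Cauchy--Lipschitz theorem in Banach spaces yields a unique maximal solution $u \in C^1([0,T_{\max}); L^\infty(\Omega))$.

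The heart of the proof is the derivation of the pointwise bounds, which I would obtain from invariant-region considerations for the scalar ODE $\dot y = y(1-y)(y-\lambda(t))$ solved, for a.e.\ $x \in \Omega$, by $y(t) = u(x,t)$. The cubic on the right-hand side vanishes at $0$, $1$ and $\lambda(t)$. The key observation unlocking all three cases is that, since $f(v) = v\,g(v)$, one has $\lambda(v) = \int_\Omega v\,g(v)\,dx \, / \int_\Omega g(v)\,dx$; as $g(u)$ has a constant sign under each hypothesis ($g(u) \le 0$ under $\bf (H_1)$ and $\bf (H_3)$, $g(u) \ge 0$ under $\bf (H_2)$), the number $\lambda(t)$ is a weighted average of $u(\cdot,t)$ against a sign-definite weight, whence $\essinf_\Omega u(\cdot,t) \le \lambda(t) \le \esssup_\Omega u(\cdot,t)$. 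Under $\bf (H_2)$ both $0$ and $1$ are equilibria of the scalar ODE, so $[0,1]$ is trapped by uniqueness and \eqref{ine:h2} is immediate. Under $\bf (H_1)$ (resp.\ $\bf (H_3)$) the endpoint $u=1$ (resp.\ $u=0$) is an equilibrium, yielding the lower (resp.\ upper) bound, while at the opposite endpoint $u = M := \esssup_\Omega u_0$ (resp.\ $u = m := \essinf_\Omega u_0$) the sign of the cubic is governed by $M - \lambda \ge 0$ (resp.\ $m - \lambda \le 0$), making the constant $M$ a supersolution (resp.\ $m$ a subsolution).

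The circularity inherent in this argument -- the bound $\lambda \le M$ rests on $u \le M$, which is precisely what we are trying to prove -- I would break by a continuity/bootstrap argument: letting $T^*$ be the supremum of times up to which the desired bounds hold, the weighted-average inequality guarantees the correct sign of the cubic at the free endpoint throughout $[0,T^*)$, so scalar comparison keeps $u$ inside the interval, and continuity of $t \mapsto u(\cdot,t)$ in $L^\infty(\Omega)$ then forces $T^* = T_{\max}$. The most delicate point is the passage from the $L^\infty(\Omega)$-valued $C^1$ formulation to the genuinely pointwise-in-$x$ scalar ODE, i.e.\ selecting a representative for which $t \mapsto u(x,t)$ is $C^1$ for a.e.\ $x$; I expect this (routine but careful) measurability step to be the main technical obstacle.

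Finally, to upgrade to global existence I would exclude the two causes of finite-time breakdown. The bounds just obtained prevent blow-up of $\|u(\cdot,t)\|_\infty$, so it remains to show that $G(t) := \int_\Omega g(u(\cdot,t))\,dx$ cannot reach $0$ in finite time. Writing $u_t = g(u)(u-\lambda)$ and differentiating gives $G'(t) = \int_\Omega (1-2u)\,g(u)\,(u-\lambda)\,dx$, and the uniform $L^\infty$ bounds keep the factors $(1-2u)$ and $(u-\lambda)$ bounded, so that $|G'(t)| \le C\,|G(t)|$ for a constant $C$ depending only on those bounds. Hence $|G(t)| \ge |G(0)|\,e^{-Ct} > 0$ on every finite interval, so $u(t)$ stays in a compact subset of $U$ on bounded time intervals and the maximal solution is global, i.e.\ $T_{\max} = \infty$, which completes the proof.
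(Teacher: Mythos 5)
This proposition is imported verbatim from \cite{thanhnam}; the paper under review gives no proof of it, so there is nothing to match your argument against line by line. Judged on its own terms, your plan is sound and is consistent with the machinery the paper does recall later (the representation $u(x,t)=Y(t;u_0(x))$ in \eqref{problem:ODE:22:6:3:14}, the comparison principle of Proposition~\ref{prop-sosanh}, and the fact that $\lambda(t)\in I_i$, which the paper derives in Corollary~\ref{bounds:lambda(t)} \emph{from} the bounds you are proving, whereas you correctly reverse the logic via the weighted-average identity $\lambda=\int u\,g(u)\,dx/\int g(u)\,dx$). The local Cauchy--Lipschitz step, the observation that $0$, $1$ are equilibria of the scalar ODE (handling $\bf (H_2)$ entirely and one endpoint each of $\bf (H_1)$, $\bf (H_3)$), the measurability caveat (resolved by \eqref{problem:ODE:22:6:3:14}), and the Gronwall bound $|G'|\le C|G|$ keeping the denominator away from zero (valid because $g(u)$ has constant sign, so $\int|g(u)|\,dx=|G|$) are all correct and would carry through.

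The one step I would not accept as written is the continuation argument for the free endpoint under $\bf (H_1)$ (and symmetrically $\bf (H_3)$). Knowing $u\le M$ on $[0,T^*)$ gives $\lambda\le M$ on $[0,T^*]$ by continuity, but the comparison with the constant supersolution $M$ then only reproduces $u\le M$ on $[0,T^*]$; to push past $T^*$ you need $\lambda(t)\le M$ for $t$ slightly beyond $T^*$, which your bootstrap does not yet provide, so $T^*=T_{\max}$ does not follow as directly as you claim. The gap is easily repaired and the repair removes the circularity altogether: once $u\ge 1$ is known (from the equilibrium at $1$, with no input on $\lambda$), monotonicity of $s\mapsto Y(t;s)$ gives $\esssup_\Omega u(\cdot,t)=Y(t;M)=:W(t)$ and the \emph{unconditional} bound $\lambda(t)\le W(t)$; then $\dot W=W(1-W)(W-\lambda)\le 0$ because $W\ge 1$ and $W\ge\lambda$, so $W(t)\le W(0)=M$ for all $t$ in the maximal interval, with no bootstrap needed. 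With that substitution your outline is a complete and correct proof strategy.
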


Given $u_0$ satisfying $\bf (H_i)$ for some $i=1,2,3$, we define the set $I_i$ by 
$$
I_1:=[1, \esssup_{\Omega} u_0], \quad I_2:=[0, 1], \quad I_3:=[\essinf_{\Omega} u_0, 0]\ ,
$$
according to the value of $i$. A consequence of Proposition~\ref{thm:existence:boundedness} is that $I_1$, $I_2$, and $I_3$ are invariant sets for the flow associated to $(P)$. This property entails the boundedness of $\lambda$ as shown below.

\begin{cor}\label{bounds:lambda(t)}
Assume that $u_0$ satisfies $\bf (H_i)$ for some $i=1,2,3$. Then $\lambda(t) \in I_i$ for all $t \ge 0$.
\end{cor}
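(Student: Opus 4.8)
The plan is to exploit the algebraic identity $f(z) = z\,g(z)$, which recasts $\lambda(t)$ as a weighted average of $u(\cdot,t)$. Indeed, since $f(u) = u\,g(u)$, definition \eqref{lambda} gives $\lambda(t) = \left( \int_\Omega u\,g(u)\,dx \right) / \left( \int_\Omega g(u)\,dx \right)$, the denominator being nonzero for all $t \ge 0$ by property (ii) of Definition~\ref{thedefinitionofsolution}. The idea is then to combine the pointwise bounds $u(x,t) \in I_i$ furnished by Proposition~\ref{thm:existence:boundedness} with the fact that the weight $g(u)$ keeps a fixed sign on each invariant set $I_i$, so that the weighted average inherits the same two-sided bound as $u$ itself.

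Writing $I_i = [a_i, b_i]$, I would establish the endpoint inequalities $\lambda(t) \ge a_i$ and $\lambda(t) \le b_i$ simultaneously by means of the identity
\begin{equation*}
\lambda(t) - c = \frac{\displaystyle\int_\Omega (u - c)\, g(u)\, dx}{\displaystyle\int_\Omega g(u)\, dx}\ , \qquad c \in \R\ ,
\end{equation*}
which follows at once from $f(u) - c\,g(u) = (u-c)\,g(u)$. Taking $c = a_i$ and $c = b_i$ reduces the proof to controlling the sign of the numerator $\int_\Omega (u-c)\,g(u)\,dx$ relative to that of the denominator.

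The only point requiring care is the sign bookkeeping across the three cases. On $I_2 = [0,1]$ one has $g(u) \ge 0$, hence $\int_\Omega g(u)\,dx > 0$; together with $(u-a_2)\,g(u) \ge 0$ and $(u-b_2)\,g(u) \le 0$ a.e.\ (because $a_2 \le u \le b_2$), this yields $\lambda(t) - a_2 \ge 0$ and $\lambda(t) - b_2 \le 0$. On $I_1 = [1, \esssup_\Omega u_0]$ and $I_3 = [\essinf_\Omega u_0, 0]$ the weight satisfies instead $g(u) \le 0$, so that $\int_\Omega g(u)\,dx < 0$; the pointwise inequalities $(u-a_i)\,g(u) \le 0$ and $(u-b_i)\,g(u) \ge 0$ then combine with the negative denominator to give again $\lambda(t) - a_i \ge 0$ and $\lambda(t) - b_i \le 0$. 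Thus the two sign reversals in the cases $\bf (H_1)$ and $\bf (H_3)$ — one from the sign of the weight and one from dividing by a negative integral — cancel, and the conclusion $\lambda(t) \in I_i$ holds uniformly. I expect no genuine obstacle beyond this careful tracking of signs, the substance of the argument being entirely contained in the identity $f = z\,g$ together with the invariance provided by Proposition~\ref{thm:existence:boundedness}.
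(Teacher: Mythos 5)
Your proof is correct and follows exactly the route the paper takes: the paper's own proof is a one-line reference to the identity $f(z)=z\,g(z)$ and the uniform bounds of Proposition~\ref{thm:existence:boundedness}, which is precisely the weighted-average argument you carry out in detail. Your sign bookkeeping across the three cases is accurate, so this is simply a fully written-out version of the paper's argument.
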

\begin{proof}
The corollary is a consequence of the uniform bounds for $u$ in Proposition~\ref{thm:existence:boundedness} and the property $f(z)=zg(z)$ for $z\in\mathbb{R}$.
\end{proof}

We finally recall that, given an initial condition $u_0$ satisfying $\bf (H_i)$ for some $i=1,2,3,$ and denoting the corresponding solution to $(P)$ given by Proposition~\ref{thm:existence:boundedness} by $u$, the $\omega$-limit set of $u_0$ is defined in \cite{thanhnam} as follows:
\begin{equation*}
\varphi\in \omega(u_0) \;\text{ if and only if }\; 
\left\{
\begin{array}{l}
\varphi\in L^1(\Omega) \;\text{ and there is a sequence }\; (t_n)_{n\ge 1} \\
\text{ such that }\; \displaystyle{\lim_{n\to\infty} \|u(t_n)-\varphi\|_{L^1(\Omega)} = 0} \text{ and }\\
\displaystyle{\lim_{n\to\infty} t_n = \infty}\ .
\end{array}
\right. 
\end{equation*}

Several properties of $\omega(u_0)$ are obtained in \cite{thanhnam}:

\begin{theorem}[\cite{thanhnam}] \label{thm:oml}
\begin{enumerate}[label=\emph{(\roman*)}]
\item If  $\bf (H_1)$ holds, then $\omega(u_0)=\{\varphi\}$ is a singleton and there are $\mu>1$ and a measurable subset $A$ of $\Omega$ such that
\begin{equation}\label{oml:h1}
\varphi = \mu \chi_{A} + \chi_{\Omega\setminus A}\ .
\end{equation}
\item If $\bf (H_2)$ holds and $\varphi\in \omega(u_0)$, then there are $\nu\in (0,1)$ and two disjoint measurable subsets $A_1$ and $A_2$ of $\Omega$ such that
\begin{equation}\label{oml:h2}
\varphi = \chi_{A_1} + \nu \chi_{A_2}\ .
\end{equation}
\item If $\bf (H_3)$ holds, then $\omega(u_0)=\{\varphi\}$ is a singleton and there are $\xi<0$ and a measurable subset $A$ of $\Omega$ such that
\begin{equation}\label{oml:h3}
\varphi = \xi \chi_{A}\ .
\end{equation}
\end{enumerate}
\end{theorem}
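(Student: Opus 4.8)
The plan is to reduce the problem to a family of scalar ordinary differential equations and to analyse the long-time behaviour of the coupling coefficient $\lambda$. For each fixed $x$, the function $t\mapsto u(x,t)$ solves $\dot v = g(v)(v-\lambda(t)) = v(1-v)(v-\lambda(t))$, an equation whose right-hand side depends on $x$ only through the initial value $v(0)=u_0(x)$. The comparison principle for this scalar equation shows that the flow preserves the pointwise order, so that there is a nondecreasing map $\Theta_t$ with $u(x,t)=\Theta_t(u_0(x))$ for a.e.\ $x$; this both yields the relative compactness in $L^1(\Omega)$ making $\omega(u_0)$ nonempty and forces every element of $\omega(u_0)$ to be of the form $\psi\circ u_0$ with $\psi$ nondecreasing. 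The candidate limit values are read off the phase line of the frozen equation, whose equilibria are $0$, $1$ and $\lambda$: since the derivative of $v\mapsto v(1-v)(v-\lambda)$ equals $\lambda(1-\lambda)$ at $v=\lambda$, the interior equilibrium is attracting within the invariant interval $I_1$ (resp.\ $I_3$) while the boundary equilibrium $1$ (resp.\ $0$) is repelling; under $\mathbf{(H_2)}$ the roles are reversed, $0$ and $1$ being attracting and the interior value $\lambda$ being an unstable threshold.

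Under $\mathbf{(H_1)}$ the points where $u_0=1$ stay frozen, while the phase line drives every point with $u_0>1$ towards the moving interior equilibrium $\lambda(t)$. Assuming that $\lambda(t)$ converges to a limit $\lambda_\infty$, a standard asymptotically autonomous argument then gives $u(x,t)\to\lambda_\infty$ for a.e.\ $x\in A:=\{u_0>1\}$ and $u(x,t)\to1$ on $\Omega\setminus A$, that is $\varphi=\lambda_\infty\chi_A+\chi_{\Omega\setminus A}$. The mass conservation identity \eqref{massconservation} both excludes the degenerate limit $\varphi\equiv1$, because $\int_\Omega u_0>|\Omega|$ when $u_0\ge1$ and $u_0\not\equiv1$, and determines $\mu=\lambda_\infty$ through $\mu|A|+|\Omega\setminus A|=\int_\Omega u_0$, which forces $\mu>1$ and $|A|>0$. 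The case $\mathbf{(H_3)}$ is completely symmetric: the attracting equilibrium is $\xi=\lambda_\infty<0$, its negativity being guaranteed by $\int_\Omega u_0<0$, and $\varphi=\xi\chi_A$ with $A=\{u_0<0\}$.

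Under $\mathbf{(H_2)}$ the interior equilibrium is repelling, so that a point can retain an interior limit value only by tracking the moving threshold $\lambda(t)$. Here I would argue along a sequence $t_n\to\infty$ realizing a prescribed $\varphi\in\omega(u_0)$, passing to a subsequence along which $u(t_n)\to\varphi$ a.e.\ and $\lambda(t_n)\to\nu\in[0,1]$, and show that any accumulation value of $u(x,t_n)$ in the open interval $(0,1)$ must equal $\nu$, a value $c>\nu$ lying in the basin of $1$ and a value $c<\nu$ in the basin of $0$. This produces $\varphi=\chi_{A_1}+\nu\chi_{A_2}$ with $A_1=\{\varphi=1\}$ and $A_2=\{\varphi=\nu\}$, the inclusion $\nu\in(0,1)$ reflecting that $u_0(1-u_0)\not\equiv0$ prevents $\varphi$ from being compatible with mass conservation as a pure $\{0,1\}$-indicator. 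Since nothing here forces $\lambda$ to converge, distinct sequences may yield distinct values of $\nu$, which is precisely why $\omega(u_0)$ need not be a singleton in this case.

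The main obstacle is the convergence of the nonlocal coefficient $\lambda$ in the cases $\mathbf{(H_1)}$ and $\mathbf{(H_3)}$: the arguments above only control $u$ along a sequence of times and must be upgraded to genuine convergence of the whole orbit, which amounts to ruling out oscillations of $t\mapsto\lambda(t)$. I expect the cleanest route to be a monotonicity (or bounded variation) statement for $\lambda$, obtained by differentiating $\lambda=\int_\Omega f(u)\,dx\,/\int_\Omega g(u)\,dx$ and exploiting the orthogonality relation $\int_\Omega g(u)(u-\lambda)\,dx=0$ together with the constant signs of $g$ and $g'$ on the invariant interval; alternatively, a LaSalle-type argument based on a suitable Lyapunov functional $\int_\Omega\Psi(u)\,dx$ would confine the limit set of $\lambda$ to a single point. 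Once the convergence of $\lambda$ is in hand, the phase-line analysis and the mass-conservation identity close the argument in all three cases.
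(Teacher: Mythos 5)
This theorem is not proved in the present paper: it is quoted from \cite{thanhnam}, where it is obtained via rearrangement theory. The paper's own contribution (Sections~\ref{sec3}--\ref{sec4}) is precisely to carry out, for sharper versions of (i) and (iii), the program you sketch: reduction to the scalar problem $(ODE)$, the comparison principle of Proposition~\ref{prop-sosanh}, the phase-line analysis around the equilibria $0$, $1$, $\lambda$, and mass conservation to pin down $\mu=\lambda_\infty$ and $\xi=\lambda_\infty$. So your strategy is the right one; the issue is the step you yourself flag as the main obstacle, namely the convergence of $\lambda(t)$, and neither of your two proposed routes closes it as stated. Differentiating $\lambda=F/G$ with $F=\int_\Omega f(u)\,dx$, $G=\int_\Omega g(u)\,dx$ and using $f'=g+zg'$ gives
\begin{equation*}
G\,\dot\lambda \;=\; \int_\Omega g(u)^2\,(u-\lambda)\,dx \;+\; \int_\Omega g'(u)\,g(u)\,(u-\lambda)^2\,dx\ ,
\end{equation*}
and the first integral has no definite sign on $I_1$ (or $I_3$), so $\lambda$ is not monotone and no bounded-variation bound drops out. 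A generic LaSalle argument with a single Lyapunov functional likewise does not by itself confine $\lambda$. The device that actually works (Proposition~\ref{prop:lyapunov:functionals}) is finer: because $\int_\Omega u_t\,dx=0$, one has $\frac{d}{dt}\int_\Omega\Phi(u)\,dx=\int_\Omega(\Phi'(u)-\Phi'(\lambda))(u-\lambda)\,u(1-u)\,dx$, which has a fixed sign for \emph{every} $\Phi$ with $\Phi'$ monotone on the invariant interval, since $u(1-u)$ does not change sign there. Applying this to $\Phi=-g$ and $\Phi=-f$ (or to the pieces $z^2$ and $z^3$ of $f$) makes the numerator and denominator of $\lambda$ converge \emph{separately}; one then still has to show $l_g\neq 0$, which is done by contradiction using mass conservation and $u_0\not\equiv 1$ (resp. $u_0\not\equiv 0$). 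Without this (or an equivalent substitute) your proof of (i) and (iii) is conditional.

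There is a second, smaller gap in your treatment of (ii). When $\lambda$ is not known to converge, the assertion that an accumulation value $c>\nu$ of $u(x,t_n)$ "lies in the basin of $1$" is not justified by a frozen phase-line picture: $\lambda(t)$ could subsequently rise above $u(x,t)$ and push it back down, so membership in a basin is not determined by the position at a single time $t_n$. Likewise, the uniqueness of the interior value $\nu$ requires an expansion estimate at the moving interior equilibrium (two distinct trajectories of $(ODE)$ cannot both converge to $\lambda_\infty\in(0,1)$ because the linearization there has positive limit $\lambda_\infty(1-\lambda_\infty)$ — this is Lemma~\ref{lem:forH2}\,(iii) in the paper), and that argument again presupposes $\lambda(t)\to\lambda_\infty$, i.e. $l_g\neq 0$. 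Indeed the paper only obtains its (H$_2$) results under that extra hypothesis; the unconditional statement (ii) rests on the rearrangement structure exploited in \cite{thanhnam}, which is absent from your proposal. Finally, note that (ii) as stated does not assert $|A_2|>0$, so your mass-conservation argument for $\nu\in(0,1)$ is not needed (nor would it suffice, since $\varphi=\chi_{A_1}$ is not excluded by the statement).
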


\section{Lyapunov functionals and limit of the nonlocal term}\label{sec3}

\begin{prop}\label{prop:lyapunov:functionals}
Assume that $\bf (H_i)$ holds for some $i =1, 2, 3$. Let $\Phi \in C^1(\R)$ be such that $\Phi'$ is non-decreasing on $I_i$. Then 
$$E_i(u(t)):=(-1)^{i+1}\int_\Omega \Phi(u(x,t))\,dx$$
 is a Lyapunov functional of Problem $(P)$. As  a consequence, $\lim_{t \to \infty} E_i(u(t))$ exists as $t \to \infty$.
\end{prop}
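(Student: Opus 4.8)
The plan is to show that $t \mapsto E_i(u(t))$ is non-increasing and bounded, so that its limit at $t = \infty$ exists by monotone convergence. First I would differentiate $E_i$ along the flow: since $u \in C^1([0,\infty);L^\infty(\Omega))$ and $\Phi \in C^1(\R)$, I expect
\[
\frac{d}{dt} E_i(u(t)) = (-1)^{i+1} \int_\Omega \Phi'(u(x,t))\, u_t(x,t)\, dx,
\]
and I would then substitute the evolution equation in the form $u_t = f(u) - \lambda g(u) = g(u)(u - \lambda)$, using the identity $f(z) = z g(z)$ recorded in Corollary~\ref{bounds:lambda(t)}.

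The key observation is that $\lambda$ is built precisely so that the zeroth moment of the driving term vanishes: by the definition of $\lambda$ in \eqref{lambda},
\[
\int_\Omega g(u)\,(u-\lambda)\, dx = \int_\Omega f(u)\,dx - \lambda \int_\Omega g(u)\,dx = 0.
\]
Multiplying this identity by the constant $\Phi'(\lambda)$ and subtracting, I can rewrite the derivative as
\[
\frac{d}{dt} E_i(u(t)) = (-1)^{i+1} \int_\Omega g(u)\,[\Phi'(u) - \Phi'(\lambda)]\,(u - \lambda)\, dx.
\]
This is the crucial reformulation, since it exposes a product whose sign is controlled by the monotonicity of $\Phi'$, rather than by the sign of the unknown term $u-\lambda$ alone.

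It then remains to check the sign of the integrand in each of the three cases. By Proposition~\ref{thm:existence:boundedness} one has $u(x,t) \in I_i$ a.e., and by Corollary~\ref{bounds:lambda(t)} one has $\lambda(t) \in I_i$; since $\Phi'$ is non-decreasing on $I_i$, the factor $[\Phi'(u) - \Phi'(\lambda)](u - \lambda)$ is non-negative a.e. On the other hand, a direct inspection of the hypotheses shows $(-1)^{i+1} g(u) \le 0$ a.e. in each case: for $i=1$ and $i=3$ one has $g(u) = u(1-u) \le 0$ while $(-1)^{i+1} = 1$, and for $i=2$ one has $g(u) \ge 0$ while $(-1)^{i+1} = -1$. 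Hence the integrand is non-positive and $\tfrac{d}{dt}E_i(u(t)) \le 0$, so $E_i(u(\cdot))$ is non-increasing. Finally, $I_i$ is a compact interval and $\Phi$ is continuous, so $\Phi$ is bounded on $I_i$; together with $|\Omega| < \infty$ this makes $E_i(u(t))$ bounded, and a bounded non-increasing function of $t$ admits a finite limit as $t \to \infty$.

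The step requiring the most care is the differentiation of $t \mapsto \int_\Omega \Phi(u(x,t))\,dx$ under the integral sign. I would justify it by a mean value argument, writing the difference quotient as $\Phi'(\theta_h(x))\,[u(x,t+h)-u(x,t)]/h$ for an intermediate value $\theta_h(x) \in I_i$, and then using that $[u(t+h)-u(t)]/h \to u_t(t)$ in $L^\infty(\Omega)$, hence in $L^1(\Omega)$, by the $C^1$ regularity, that $\Phi'$ is uniformly bounded on the compact set $I_i$, and dominated convergence on the bounded domain $\Omega$ to pass to the limit. This is routine, but it is precisely where the $L^\infty$ smoothness of $t \mapsto u(t)$ and the invariance of $I_i$ are genuinely used.
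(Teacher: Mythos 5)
Your proof is correct and follows essentially the same route as the paper: the key step in both is to subtract the constant $\Phi'(\lambda(t))$ times $\int_\Omega u_t\,dx=0$ (which you derive directly from the definition of $\lambda$, while the paper cites the mass conservation property \eqref{massconservation} --- the same fact), rewrite the derivative as $(-1)^{i+1}\int_\Omega g(u)\bigl[\Phi'(u)-\Phi'(\lambda)\bigr](u-\lambda)\,dx$, and conclude by the sign of $(-1)^{i+1}g(u)$ on $I_i$ together with the monotonicity of $\Phi'$. The only differences are cosmetic: you treat all three cases explicitly where the paper writes out only $i=1$, and you add a careful justification of differentiating under the integral sign, which the paper omits.
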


\begin{proof}
We only prove the statement in the case $i=1$. It follows from $(P)$ and \eqref{massconservation} that
\begin{align*}
\frac{d}{dt} E_1(u)&=\int_\Omega \Phi'(u)u_t\,dx=\int_\Omega \Phi'(u)u_t\,dx-\int_\Omega \Phi'(\lambda)u_t\,dx\\
&=\int_\Omega \big(\Phi'(u)-\Phi'(\lambda) \big) u_t\,dx \\
&=\int_\Omega \big(\Phi'(u)-\Phi'(\lambda) \big) \big(u-\lambda \big ) u(1-u)\,dx. 
\end{align*}
Since $u\ge 1$ and $\lambda \ge 1$ by \eqref{ine:h1} and Corollary~\ref{bounds:lambda(t)}, the monotonicity of $\Phi'$ on $I_1$ implies $(\Phi'(u)-\Phi'(\lambda) \big) \big(u-\lambda \big ) \ge 0$. Therefore
$$\frac{d}{dt} E_1(u(t)) \le 0\ ,$$
and $t\mapsto E_1(u(t))$ is a non-increasing function of time. In other words, $E_1$ is a Lyapunov functional for $(P)$. Furthermore, since $u$ is uniformly bounded in time by Proposition~\ref{thm:existence:boundedness}, $t\mapsto E_1(u(t))$ is bounded from below. Hence $\lim_{t \to \infty} E_1(u(t))$ exists.
\end{proof}

\begin{rem}
The above proposition implies that there are infinitely many Lyapunov functionals for Problem $(P)$.
\end{rem}

\begin{cor} \label{cor:lim:nume:denominateur}
Assume that $\bf (H_i)$ holds for some $i =1, 2, 3$. Then
$$
l_g := \lim_{t \to \infty} \int_\Omega g(u(x,t))\ dx \;\;\text{ and }\;\;  l_f := \lim_{t \to \infty} \int_\Omega f(u(x,t))\ dx \;\;\text{ exist.}
$$ 
\end{cor}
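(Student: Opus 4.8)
The plan is to reduce the existence of $l_g$ and $l_f$ to the convergence, as $t\to\infty$, of the two moments $\int_\Omega u^2(x,t)\,dx$ and $\int_\Omega u^3(x,t)\,dx$, each of which I would extract from Proposition~\ref{prop:lyapunov:functionals} by choosing $\Phi$ to be a suitable monomial. Since $g(z)=z-z^2$ and $f(z)=z^2-z^3$, and since the mass $\int_\Omega u(x,t)\,dx$ is conserved in time by \eqref{massconservation}, I can write
$$
\int_\Omega g(u(x,t))\,dx = \int_\Omega u_0(x)\,dx - \int_\Omega u^2(x,t)\,dx
$$
and
$$
\int_\Omega f(u(x,t))\,dx = \int_\Omega u^2(x,t)\,dx - \int_\Omega u^3(x,t)\,dx\ .
$$
It therefore suffices to prove that $\int_\Omega u^2$ and $\int_\Omega u^3$ both converge as $t\to\infty$.

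For the quadratic moment I would apply Proposition~\ref{prop:lyapunov:functionals} with $\Phi(z)=z^2$, whose derivative $\Phi'(z)=2z$ is non-decreasing on all of $\R$, hence on $I_i$ for every $i\in\{1,2,3\}$. The proposition then immediately gives the existence of $\lim_{t\to\infty}\int_\Omega u^2(x,t)\,dx$, and combined with the conservation of mass this yields the existence of $l_g$ in every case.

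For the cubic moment I would take $\Phi(z)=z^3$, for which $\Phi'(z)=3z^2$ is non-decreasing on $[0,\infty)\supset I_1\cup I_2$; this settles the cases $\bf (H_1)$ and $\bf (H_2)$. The only genuine obstacle is that $3z^2$ is non-increasing, rather than non-decreasing, on $I_3\subset(-\infty,0]$, so that $\Phi(z)=z^3$ is not an admissible test function when $\bf (H_3)$ holds. I would circumvent this minor difficulty by applying Proposition~\ref{prop:lyapunov:functionals} instead to $\Phi(z)=-z^3$, whose derivative $-3z^2$ is non-decreasing on $(-\infty,0]\supset I_3$; since the existence of $\lim_{t\to\infty}\int_\Omega \Phi(u(x,t))\,dx$ is insensitive to the overall sign of $\Phi$, this still yields the convergence of $\int_\Omega u^3$.

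Putting the three cases together, $\lim_{t\to\infty}\int_\Omega u^3(x,t)\,dx$ exists under each hypothesis $\bf (H_i)$. Combining this with the convergence of $\int_\Omega u^2$ already obtained, the displayed identity for $\int_\Omega f(u)$ gives the existence of $l_f$, which completes the proof. I expect the argument to be short and essentially computational once the role of the sign convention $(-1)^{i+1}$ in Proposition~\ref{prop:lyapunov:functionals} is correctly tracked on $I_3$.
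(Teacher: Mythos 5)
Your proof is correct and rests on the same mechanism as the paper's: apply Proposition~\ref{prop:lyapunov:functionals} to low-degree polynomials whose derivatives are monotone on the invariant interval $I_i$, flipping the sign of $\Phi$ where needed (your $\Phi(z)=-z^3$ on $I_3$ is a valid fix, since $-3z^2$ is indeed non-decreasing on $(-\infty,0]$). The only difference is organizational: you decompose into the monomials $z^2$ and $z^3$ uniformly and invoke the mass conservation \eqref{massconservation} for $l_g$, whereas the paper applies the proposition directly to $\mp g$ and $\mp f$ under $\bf (H_1)$ and $\bf (H_3)$ and reserves the monomial splitting $f=z^2-z^3$ for the case $\bf (H_2)$.
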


\begin{proof}
First we consider the case where $\bf (H_1)$ holds. Recall that in this case $u(x,t) \in I_1$ for a.e. $x \in \Omega$ and all $t \ge 0$. Since the functions $-g'(z)=2z-1$ and $-f'(z) = 3z^2 -2z$ are non-decreasing on $I_1$, we infer from Proposition~\ref{prop:lyapunov:functionals} that
$$
- l_g = \lim_{t \to \infty} \int_\Omega (-g)(u(x,t))\ dx \;\;\text{ and }\;\;  - l_f = \lim_{t \to \infty} \int_\Omega (-f)(u(x,t))\ dx
$$
both exist. Hence the result of the corollary follows in the case that $\bf (H_1)$ holds.

The case where $\bf (H_3)$ holds is proved in a similar way. 

Finally we consider the case of hypothesis $\bf (H_2)$. As in the previous cases, the function $-g'$ is non-decreasing on $I_2$ so that 
$$
l_g  = \lim_{t \to \infty} \int_\Omega g(u(x,t))\ dx \;\;\text{ exists.}
$$
To complete the proof we simply note that $f=f_1-f_2$ with $f_1(z):=z^2$ and $f_2(z):= z^3$. Since $f_1'$ and $f'_2$ are non-decreasing on $I_2$, using again Proposition~\ref{prop:lyapunov:functionals} ensures the existence of the limits 
$$
\lim_{t \to \infty} \int_\Omega f_1(u(x,t))\ dx \;\;\text{ and }\;\; \lim_{t \to \infty} \int_\Omega f_2(u(x,t))\ dx\ ,
$$
from which we readily deduce that
$$
\lim_{t \to \infty} \int_\Omega f(u(x,t))\ dx = \lim_{t \to \infty} \int_\Omega f_1(u(x,t))\ dx - \lim_{t \to \infty} \int_\Omega f_2(u(x,t))\ dx
$$
exists.
\end{proof}

After this preparation we are in a position to study the behavior of $\lambda(t)$ as $t\to\infty$.

\begin{lemma}\label{lem:lambda:infty} \mbox{~~}
\begin{enumerate} [label=\emph{(\roman*)}]
\item Assume that  $\bf (H_1)$ holds. Then $\lambda_\infty:=\lim_{t \to \infty} \lambda(t)$ exists and $\lambda_\infty >1$. 
\item Assume that  $\bf (H_3)$ holds. Then $\lambda_\infty:=\lim_{t \to \infty} \lambda(t)$ exists and $\lambda_\infty <0$. 
\end{enumerate}
\end{lemma}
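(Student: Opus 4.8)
The plan is to exploit the fact that the numerator and denominator of $\lambda$ converge separately. Writing
$$
\lambda(t) = \frac{\int_\Omega f(u(x,t))\,dx}{\int_\Omega g(u(x,t))\,dx}\ ,
$$
Corollary~\ref{cor:lim:nume:denominateur} already guarantees that both $l_f$ and $l_g$ exist. Consequently the existence of $\lambda_\infty$ follows at once \emph{provided} $l_g \neq 0$, and the whole difficulty is concentrated in ruling out the degenerate case $l_g = 0$, in which $\lambda(t)$ would approach an indeterminate $0/0$ form.

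To show $l_g \neq 0$ I would combine the sign of $g$ on the invariant interval with the mass conservation identity~\eqref{massconservation}. Consider first $\bf (H_1)$ and set $M := \int_\Omega u_0\,dx$; since $u_0 \ge 1$ and $u_0 \not\equiv 1$, one has $M > |\Omega|$. Because $u(\cdot,t) \ge 1$ a.e. by~\eqref{ine:h1}, the elementary bound $u(u-1) \ge u-1$ together with~\eqref{massconservation} yields
$$
- \int_\Omega g(u(x,t))\,dx = \int_\Omega u(u-1)(x,t)\,dx \ge \int_\Omega (u-1)(x,t)\,dx = M - |\Omega| > 0
$$
for every $t \ge 0$. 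Passing to the limit gives $l_g \le -(M-|\Omega|) < 0$, so $\lambda_\infty = l_f/l_g$ exists. Under $\bf (H_3)$ the same scheme applies with $u \le 0$ and $M < 0$: here $-\int_\Omega g(u)\,dx = \int_\Omega u(u-1)\,dx \ge \int_\Omega (-u)\,dx = -M = |M| > 0$, so again $l_g < 0$.

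It then remains to pin down the sign of $\lambda_\infty$, for which I would use the factorization $f(z) - g(z) = -z(z-1)^2$. Under $\bf (H_1)$ this gives $\lambda_\infty - 1 = (l_f - l_g)/l_g$ with $l_f - l_g = -\lim_{t\to\infty}\int_\Omega u(u-1)^2(x,t)\,dx$, and, since $u \ge 1$, the Cauchy--Schwarz inequality together with~\eqref{massconservation} bounds
$$
\int_\Omega u(u-1)^2\,dx \ge \int_\Omega (u-1)^2\,dx \ge \frac{1}{|\Omega|}\left(\int_\Omega (u-1)\,dx\right)^2 = \frac{(M-|\Omega|)^2}{|\Omega|} > 0
$$
uniformly in $t$; hence $l_f - l_g < 0$, and as $l_g < 0$ we conclude $\lambda_\infty > 1$. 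For $\bf (H_3)$ I would instead bound the numerator directly: from $1-u \ge 1$ one gets $f(u) = u^2(1-u) \ge u^2$, so the same Cauchy--Schwarz/mass-conservation argument gives $l_f \ge M^2/|\Omega| > 0$, and therefore $\lambda_\infty = l_f/l_g < 0$.

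The main obstacle is precisely the nondegeneracy of the denominator, that is, the strict inequality $l_g \neq 0$: a priori the nonlocal normalization could collapse as $t \to \infty$. The point I want to stress is that mass conservation is exactly what prevents the solution from relaxing to the constant state ($u \equiv 1$ under $\bf (H_1)$, $u \equiv 0$ under $\bf (H_3)$) on which $g$ vanishes, and it is this conserved quantity---rather than any fine information on the profile of $u$---that forces both $l_g$ and the relevant numerator to remain bounded away from zero.
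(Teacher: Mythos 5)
Your proof is correct, and its skeleton matches the paper's: both reduce the problem to showing $l_g\neq 0$ (plus a sign condition on $l_f-l_g$ or $l_f$) via Corollary~\ref{cor:lim:nume:denominateur} and mass conservation. Where you differ is in how the strict inequalities are obtained. The paper argues by contradiction: if $l_g=0$ (resp.\ $\lambda_\infty=1$, resp.\ $l_f=0$), then $u(t)\to 1$ (resp.\ $u(t)\to 0$) in $L^2(\Omega)$, and \eqref{massconservation} then forces $u_0\equiv 1$ (resp.\ $u_0\equiv 0$) in the mean, contradicting $\bf (H_1)$ or $\bf (H_3)$. You instead produce explicit, uniform-in-time lower bounds: the pointwise inequalities $u(u-1)\ge u-1$ for $u\ge 1$ and $u(u-1)\ge -u$ for $u\le 0$, combined with \eqref{massconservation}, give $|l_g|\ge M-|\Omega|>0$ (resp.\ $|l_g|\ge |M|$), and Cauchy--Schwarz gives $l_g-l_f\ge (M-|\Omega|)^2/|\Omega|$ (resp.\ $l_f\ge M^2/|\Omega|$), after which the signs of $\lambda_\infty-1=(l_f-l_g)/l_g$ and $\lambda_\infty=l_f/l_g$ are immediate. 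Your route is slightly more elementary (no $L^2$-convergence step) and buys quantitative information --- explicit bounds on how far $l_g$ and $\lambda_\infty-1$ (or $\lambda_\infty$) stay from the degenerate values in terms of $\int_\Omega u_0\,dx$ and $|\Omega|$ --- whereas the paper's contradiction argument is shorter to state and reuses the same $L^2$-collapse mechanism three times. Both rest on exactly the same structural inputs, so either would serve.
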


\begin{proof} 
Owing to Corollary~\ref{cor:lim:nume:denominateur} we set 
$$
l_g:=\displaystyle\lim_{t \to \infty} \int_\Omega g(u(x,t))\ dx \ , \quad l_f:=\displaystyle\lim_{t \to \infty} \int_\Omega f(u(x,t))\ dx \ .
$$
(i) Since $u \ge 1$ by Proposition~\ref{thm:existence:boundedness}, one has $g(u(t)) \le 0$ a.e. in $\Omega$ for all $t \ge 0$, so that $l_g \le 0$. Now we show that $l_g < 0$. Assume for contradiction that $l_g =0$. Then 
$$
-\int_\Omega (u(x,t)-1)\ dx  - \int_\Omega (u(x,t)-1)^2\ dx = \int_\Omega g(u(x,t))\,dx \mathop{\longrightarrow}_{t \to \infty} 0\ . 
$$
Therefore 
$$
u(t) \to 1 \mbox{~~in~~}L^2(\Omega) \mbox{~~as~~}t \to \infty.
$$
We then deduce from the mass conservation property \eqref{massconservation} that
$$
\frac{1}{|\Omega|}\int_\Omega u_0(x)\ dx =1\ ,
$$ 
which contradicts $\bf (H_1)$. Therefore $l_g<0$ hence $\lambda_\infty:=\lim_{t \to \infty} \lambda(t)$ exists. 

It follows from Corollary~\ref{bounds:lambda(t)} that $\lambda_\infty \ge 1$. Next we show that $\lambda_\infty >1$. Assume for contradiction that $\lambda_\infty =1$. Then
\begin{align*}
\lim_{t \to \infty} \int_\Omega u(1-u)^2(x,t)\ dx & = \lim_{t \to \infty}\int_\Omega g(u(x,t))\ dx - \lim_{t \to \infty} \int_\Omega f(u(x,t))\ dx \\
& =l_g-l_f=0\ ,
\end{align*}
which gives, together with the lower bound $u\ge 1$, 
$$u(t) \to 1 \mbox{~~in~~}L^2(\Omega) \mbox{~~as~~}t \to \infty\ .$$
Arguing as above with the help of \eqref{massconservation}, we end up with a contradiction to $\bf (H_1)$.

\medskip

(ii) It is sufficient to establish that $l_g<0$ and $l_f>0$. First we show that $l_g<0$. By the mass conservation property \eqref{massconservation}, we have
$$
l_g =\lim_{t \to \infty} \int_\Omega u(x,t)\ dx - \lim_{t \to \infty} \int_\Omega u^2(x,t)\ dx \le \int_\Omega u_0(x)\ dx\ .
$$
In view of  $\bf (H_3)$, the right-hand side of the above inequality is negative so that $l_g<0$. 

Next we prove that $l_f>0$. Since $u \le 0$ a.e. in $\Omega\times (0,\infty)$, $f(u)=u^2(1-u) \ge 0$ a.e. in $\Omega\times (0,\infty)$ hence $l_f \ge 0$. Assume for contradiction that $l_f=0$. Then 
$$
0 \le \lim_{t\to\infty} \int_\Omega u^2(x,t)\ dx \le \lim_{t \to \infty}\int_\Omega u^2(1-u)(x,t)\ dx =0 \ ,
$$
so that 
$$u(t) \to 0 \mbox{~~in~~}L^2(\Omega) \mbox{~~as~~}t \to \infty.$$
We combine this property with the mass conservation \eqref{massconservation} to conclude that
$$
\int_\Omega u_0(x)\ dx =0\ ,
$$
which contradicts $\bf (H_3)$. Thus $l_f<0$.
\end{proof}

\section{Characterization of $\omega$-limit set}\label{sec4}

We first recall some notation and results from \cite{thanhnam}. Given $u_0$ satisfying $\bf (H_i)$ for some $i=1,2,3$, we denote the corresponding solution to $(P)$ by $u$ and consider the unique solution $Y(t;s)$ of the following auxiliary problem:
\begin{equation}
(ODE) \,\,
\begin{cases}
\dot{Y}(t)= Y(t)^2(1-Y(t))-\lambda(t)Y(t)(1-Y(t)), \quad t > 0,\vspace{6pt}\\
Y(0)=s,
\end{cases}
\end{equation}
where $\dot{Y}:=dY/dt$ and $\lambda$ is defined by \eqref{lambda}. Clearly the function $u$ satisfies
\begin{equation}\label{problem:ODE:22:6:3:14}
u(x,t)=Y(t;u_0(x)) \mbox{~~for a.e.~~} x \in \Omega \mbox{~~and all~~}t \ge 0.
\end{equation}

For later convenience, we introduce the differential operator
$\mathcal L$:
\begin{equation}\label{definition:operator:L}
\mathcal L(Z)(t):=\dot{Z}(t)-Z(t)^2(1-Z(t))+\lambda(t)Z(t)(1-Z(t))\ , \qquad t\ge 0\ .
\end{equation}
The following comparison principle is quite standard in the theory of ordinary differential equations; see, e.g., \cite[Theorem~6.1, page~31]{chap2hale}.

\begin{prop}\label{prop-sosanh}
Let $T > 0$ and let $Z_1, Z_2 \in C^1([0,T])$ satisfy
\begin{equation*}
\begin{cases}
\mathcal L (Z_1)(t) \le \mathcal L (Z_2)(t) \mbox{~~for all~~} t \in [0, T],\vspace{6pt}\\
Z_1(0) \le Z_2(0).
\end{cases}
\end{equation*}
Then
$$Z_1(t) \le Z_2(t) \mbox{~~for all~~} t \in [0, T].$$
\end{prop}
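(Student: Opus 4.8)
The plan is to reduce the statement to a linear differential inequality for the difference $w := Z_1 - Z_2$ and then close the argument with a Gronwall-type estimate. First I would rewrite the operator defined in \eqref{definition:operator:L} as $\mathcal L(Z)(t) = \dot Z(t) - F(t,Z(t))$, where $F(t,z) := z^2(1-z) - \lambda(t) z(1-z)$. The hypothesis $\mathcal L(Z_1) \le \mathcal L(Z_2)$ on $[0,T]$ then reads
$$\dot w(t) \le F(t, Z_1(t)) - F(t, Z_2(t)), \qquad t \in [0,T],$$
while the initial condition gives $w(0) = Z_1(0) - Z_2(0) \le 0$.

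The crucial ingredient is a Lipschitz bound on $F$ in its second argument along the two given functions. Since $Z_1$ and $Z_2$ are continuous on the compact interval $[0,T]$, they take values in a bounded interval $[-M,M]$, and $\lambda$ is bounded on $[0,T]$ (it is continuous, and in fact $\lambda(t) \in I_i$ by Corollary~\ref{bounds:lambda(t)}). As $z \mapsto F(t,z)$ is a cubic polynomial whose coefficients are bounded uniformly in $t \in [0,T]$, it is Lipschitz continuous on $[-M,M]$ with a constant $L$ independent of $t$, whence
$$|F(t, Z_1(t)) - F(t, Z_2(t))| \le L\, |w(t)|, \qquad t \in [0,T].$$

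With these two facts in hand I would conclude by contradiction. Suppose $w(\tau) > 0$ for some $\tau \in (0,T]$. Because $w$ is continuous and $w(0) \le 0$, there is a largest $t_* \in [0,\tau)$ with $w(t_*) = 0$, and $w > 0$ on $(t_*, \tau]$. On this subinterval the displayed differential inequality and the Lipschitz bound give $\dot w(t) \le L\,w(t)$, so that $t \mapsto w(t)\,e^{-Lt}$ is non-increasing on $[t_*,\tau]$; hence $w(\tau) \le w(t_*)\,e^{L(\tau - t_*)} = 0$, contradicting $w(\tau) > 0$. Therefore $w \le 0$, i.e.\ $Z_1 \le Z_2$, on $[0,T]$.

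I do not expect a genuine obstacle here, since this is the classical ODE comparison principle, as the cited reference \cite[Theorem~6.1]{chap2hale} indicates. The only point requiring care is that the nonlinearity $F$ is not globally Lipschitz but merely locally so; the argument hinges on using the a priori boundedness of the two prescribed $C^1$ functions (and of $\lambda$) to extract a Lipschitz constant uniform in time. Once that is secured, the rest is routine, and one could equivalently phrase it through the difference quotient $a(t) := [F(t,Z_1(t)) - F(t,Z_2(t))]/w(t)$ (set to $0$ where $w = 0$), which satisfies $\dot w \le a(t)\,w$ with $|a| \le L$, followed by the same integrating factor $e^{-\int_0^t a}$.
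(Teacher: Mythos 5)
Your argument is correct. Note, though, that the paper does not prove Proposition~\ref{prop-sosanh} at all: it simply labels it as the standard ODE comparison principle and refers to \cite[Theorem~6.1, page~31]{chap2hale}. So you are supplying a self-contained proof where the authors give only a citation. Your proof is the classical one and is complete: the reduction to $\dot w \le F(t,Z_1)-F(t,Z_2)$ with $w=Z_1-Z_2$ and $w(0)\le 0$ is right, and you correctly identify the only delicate point, namely that $F(t,z)=z^2(1-z)-\lambda(t)z(1-z)$ is only locally Lipschitz in $z$, which you handle by restricting to the compact range of $Z_1,Z_2$ on $[0,T]$ and using the boundedness of $\lambda$ (which holds since $u\in C^1([0,\infty);L^\infty(\Omega))$ and the denominator in \eqref{lambda} never vanishes, or directly from Corollary~\ref{bounds:lambda(t)}). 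The contradiction step via the last zero $t_*$ of $w$ before $\tau$ and the integrating factor $e^{-Lt}$ is airtight, including the existence of $t_*$ by the intermediate value theorem and the fact that $\dot w\le Lw$ also holds at $t_*$ where $w(t_*)=0$. This is essentially the content of the cited theorem in Hale, so your write-up could serve as the omitted proof verbatim.
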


\subsection{$\omega$-limit set when $\bf (H_1)$ holds} \label{subsec1}

We first identify two invariant sets with the help of problem $(ODE)$.

\begin{lemma}\label{lem:for:H1}
Suppose $\bf (H_1)$. We define the sets 
$$\Omega_1(t):=\{x \in \Omega: Y(t;u_0(x))=1\} \quad \mbox{and} \quad \Omega_+(t):=\{x \in \Omega: Y(t;u_0(x))>1\}$$
for each $t\ge 0$. Then 
$$
\Omega_1(t)=\Omega_1(0) \quad \mbox{ and } \quad \Omega_+(t)=\Omega_+(0) \quad\mbox{ for all }\;\; t \ge 0\ .
$$
\end{lemma}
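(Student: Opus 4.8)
The plan is to exploit the fact that the constant function $Y\equiv 1$ is an equilibrium of the scalar problem $(ODE)$, together with the uniqueness of solutions of $(ODE)$ and the lower bound $Y(t;u_0(x))\ge 1$ inherited from \eqref{ine:h1} and \eqref{problem:ODE:22:6:3:14}. First I would note that $t\mapsto\lambda(t)$ is continuous on $[0,\infty)$, being a ratio of continuous functions of $t$ whose denominator never vanishes by Definition~\ref{thedefinitionofsolution}, and bounded by Corollary~\ref{bounds:lambda(t)}; since the right-hand side $Z\mapsto Z^2(1-Z)-\lambda(t)Z(1-Z)$ of $(ODE)$ is a polynomial in $Z$, hence locally Lipschitz, the Cauchy problem $(ODE)$ admits a unique solution through each initial value. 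I would then observe that the constant function $Y\equiv 1$ solves $(ODE)$, since $\mathcal L(1)\equiv 0$ by \eqref{definition:operator:L} because $g(1)=0$.

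Next I would establish the two inclusions $\Omega_1(0)\subseteq\Omega_1(t)$ and $\Omega_+(0)\subseteq\Omega_+(t)$. For the first, if $u_0(x)=1$ then $Y(\cdot;u_0(x))$ and the constant $1$ are both solutions of $(ODE)$ sharing the initial value $1$, so they coincide by uniqueness; thus $Y(t;u_0(x))=1$ and $x\in\Omega_1(t)$ for every $t\ge 0$. For the second, suppose $u_0(x)>1$ but $Y(t_0;u_0(x))=1$ for some $t_0\ge 0$; then $Y(\cdot;u_0(x))$ and the constant $1$ are two solutions of $(ODE)$ agreeing at $t_0$, whence they coincide by uniqueness, and evaluating at $t=0$ yields $u_0(x)=1$, a contradiction. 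Hence $Y(t;u_0(x))\neq 1$ for all $t\ge 0$, and since $Y(t;u_0(x))\ge 1$ by \eqref{ine:h1} and \eqref{problem:ODE:22:6:3:14}, we obtain $Y(t;u_0(x))>1$, that is, $x\in\Omega_+(t)$.

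I would close the argument with a partition observation. Because $Y(t;u_0(x))\ge 1$ for a.e.\ $x\in\Omega$ and all $t\ge 0$, the sets $\Omega_1(t)$ and $\Omega_+(t)$ form a partition of $\Omega$ up to a null set for each fixed $t$, and likewise at $t=0$. Combining the two inclusions just proved with these partitions forces both inclusions to be equalities: intersecting $\Omega_1(t)$ with $\Omega=\Omega_1(0)\cup\Omega_+(0)$ and using $\Omega_+(0)\subseteq\Omega_+(t)$ together with $\Omega_1(t)\cap\Omega_+(t)=\emptyset$ gives $\Omega_1(t)=\Omega_1(0)$, and symmetrically $\Omega_+(t)=\Omega_+(0)$.

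I expect the only genuinely delicate point to be the justification of uniqueness for $(ODE)$, and in particular the continuity of $t\mapsto\lambda(t)$, since it underlies the ``touching implies identical'' argument used for the strict inequality defining $\Omega_+$. Note that the comparison principle of Proposition~\ref{prop-sosanh} alone would deliver only the non-strict bound $Y\ge 1$, which is insufficient to separate $\Omega_1$ from $\Omega_+$; the strictness is precisely what the uniqueness-based argument supplies.
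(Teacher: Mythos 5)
Your proof is correct and follows essentially the same route as the paper: the paper's own argument is the one-line observation that $Y(\cdot\,;1)\equiv 1$ is the unique solution of $(ODE)$ with initial value $1$, so the conclusion follows from uniqueness, which is exactly the mechanism you use. You simply spell out the details the paper leaves implicit (local Lipschitz continuity of the right-hand side via the continuity and boundedness of $\lambda$, the forward and backward ``touching implies identical'' step, and the partition argument upgrading the two inclusions to equalities).
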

\begin{proof}
Note that if $s=1$, then the unique solution of $(ODE)$ is given by $Y(t;1)=1$ for all $t\ge 0$. Therefore, the conclusion of Lemma~\ref{lem:for:H1} follows from the uniqueness of solutions of $(ODE)$.
\end{proof}

\begin{rem}
Owing to the definition of $Y(\cdot,s)$ we note that
$$
\Omega_1(0) = \{ x\in\Omega\ :\ u_0(x)=1\} \;\;\mbox{ and }\;\; \Omega_+(0) = \{ x\in\Omega\ :\ u_0(x)>1\}\ .
$$
\end{rem}

We next state the main result of this section where we identify $\omega(u_0)$, recalling that we already know that it is a singleton by Theorem~\ref{thm:oml}.

\begin{theorem} \label{thm:forH1}
Suppose $\bf (H_1)$. There holds:
\begin{enumerate}[label=(\alph*)]
\item For all $x \in \Omega_1(0)$, $Y(t;u_0(x)) \to 1$ as $t \to \infty$,
\item For all $x \in \Omega_+(0)$, $Y(t;u_0(x)) \to \lambda_\infty$ as $t \to \infty$.
\end{enumerate}
As a consequence, the only element $\varphi$ in $\omega(u_0)$ is given by 
$$\varphi=\chi_{\Omega_1(0)}+\lambda_\infty \chi_{\Omega_+(0)}\ ,$$
the value of $\lambda_\infty$ being determined by the equation
$$|\Omega_1(0)|+\lambda_\infty |\Omega_+(0)|=\int_\Omega u_0(x)\ dx\ .$$
\end{theorem}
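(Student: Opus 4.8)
The plan is to establish (a) and (b) at the level of the scalar, non-autonomous equation $(ODE)$, using the representation $u(x,t)=Y(t;u_0(x))$ from \eqref{problem:ODE:22:6:3:14}, and then to upgrade this pointwise convergence to the stated description of $\omega(u_0)$ by a dominated-convergence and mass-conservation argument. Statement (a) is immediate: for $x\in\Omega_1(0)$ one has $u_0(x)=1$, and since $Y(\cdot;1)\equiv1$ solves $(ODE)$, uniqueness gives $Y(t;u_0(x))=1$ for all $t\ge0$. The entire difficulty lies in (b).

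For (b), fix $x\in\Omega_+(0)$ and set $\phi(t):=Y(t;u_0(x))$, so $\phi(0)=u_0(x)>1$; by Lemma~\ref{lem:for:H1} we have $\phi(t)>1$ for all $t\ge0$, and $(ODE)$ reads $\dot\phi=\phi(1-\phi)\big(\phi-\lambda(t)\big)$. Since $\phi(1-\phi)<0$ on the invariant region $\{\phi>1\}$, the sign of $\dot\phi$ is opposite to that of $\phi-\lambda(t)$, so $\lambda(t)$ acts as a moving attracting equilibrium. As $\lambda(t)\to\lambda_\infty>1$ by Lemma~\ref{lem:lambda:infty}(i), I expect $\phi(t)\to\lambda_\infty$, and I would prove it by a two-sided squeeze built on Proposition~\ref{prop-sosanh}. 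Given $\varepsilon\in(0,\lambda_\infty-1)$, choose $T_\varepsilon$ with $\lambda(t)\le\lambda_\infty+\varepsilon$ for $t\ge T_\varepsilon$, and let $W$ solve the \emph{autonomous} equation $\dot W=W(1-W)\big(W-(\lambda_\infty+\varepsilon)\big)$ with $W(T_\varepsilon)=\phi(T_\varepsilon)$. Since $1$ and $\lambda_\infty+\varepsilon$ are equilibria and the latter is attracting on $(1,\infty)$, one has $W(t)\to\lambda_\infty+\varepsilon$ while remaining above $1$; a direct computation gives $\mathcal L(W)(t)=W(1-W)\big(\lambda(t)-\lambda_\infty-\varepsilon\big)\ge0=\mathcal L(\phi)(t)$ for $t\ge T_\varepsilon$. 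As $\phi(T_\varepsilon)=W(T_\varepsilon)$, Proposition~\ref{prop-sosanh} (applied on each interval $[T_\varepsilon,T]$) yields $\phi\le W$, hence $\limsup_{t\to\infty}\phi(t)\le\lambda_\infty+\varepsilon$.

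A symmetric argument, using $\lambda(t)\ge\lambda_\infty-\varepsilon$ for large $t$ and the autonomous barrier $V$ with equilibrium $\lambda_\infty-\varepsilon>1$, gives $\mathcal L(V)(t)=V(1-V)\big(\lambda(t)-\lambda_\infty+\varepsilon\big)\le0=\mathcal L(\phi)(t)$ and therefore $V\le\phi$, whence $\liminf_{t\to\infty}\phi(t)\ge\lambda_\infty-\varepsilon$. Letting $\varepsilon\to0$ in both bounds proves $\phi(t)\to\lambda_\infty$, which is (b). To conclude, combining (a) and (b) with $\Omega=\Omega_1(0)\cup\Omega_+(0)$ up to a null set (since $u_0\ge1$) gives $u(x,t)\to\chi_{\Omega_1(0)}(x)+\lambda_\infty\chi_{\Omega_+(0)}(x)$ for a.e.\ $x$; the uniform bound \eqref{ine:h1} and dominated convergence upgrade this to convergence in $L^1(\Omega)$, so $\varphi=\chi_{\Omega_1(0)}+\lambda_\infty\chi_{\Omega_+(0)}$ belongs to $\omega(u_0)$, which is the singleton identified in Theorem~\ref{thm:oml}(i). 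Passing to the limit in \eqref{massconservation} then yields $|\Omega_1(0)|+\lambda_\infty|\Omega_+(0)|=\int_\Omega u_0\,dx$, and since $u_0\not\equiv1$ forces $|\Omega_+(0)|>0$, this equation determines $\lambda_\infty$.

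The main obstacle is the construction of the comparison barriers in (b): one must verify that the frozen-coefficient solutions $W$ and $V$ stay in the region $\{Z>1\}$, where their autonomous phase portraits force convergence to $\lambda_\infty\pm\varepsilon$, and then check the differential inequalities $\mathcal L(W)\ge0\ge\mathcal L(V)$ with the correct signs so that Proposition~\ref{prop-sosanh} applies. Once the squeeze is in place, the passage from pointwise to $L^1$ convergence and the identification of $\lambda_\infty$ through mass conservation are routine.
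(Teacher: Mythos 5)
Your proposal is correct and follows essentially the same route as the paper: the two autonomous barrier equations with frozen parameter $\lambda_\infty\pm\varepsilon$ started from $Y(t_\varepsilon;u_0(x))$, the sign computation of $\mathcal L$ on each barrier, and the squeeze via Proposition~\ref{prop-sosanh}, followed by dominated convergence and mass conservation to identify $\varphi$ and $\lambda_\infty$. No substantive differences.
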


\begin{proof}
The statement (a) is obvious. We prove the statement (b) and first recall that $\lambda_\infty>1$ by Lemma~\ref{lem:lambda:infty}. Let  $x_0 \in \Omega_+(0)$
and let $\e>0$ be arbitrarily small such that $\lambda_\infty-\e>1$.
There exists $t_\e>0$ such that 
$$
\lambda(t) \in [\lambda_\infty-\e, \lambda_\infty+\e] \mbox{~~for all~~} t \ge t_\e.
$$

We define $Y(t):=Y(t;u_0(x_0))$ for all $t \ge t_\e$ and let $\alpha$ and  $\beta$ be the solutions to the ordinary differential equations 
$$\dot{\alpha}=\alpha(1-\alpha)(\alpha-\lambda_\infty+\e)\ , \quad t\ge t_\varepsilon\ , \quad \alpha(t_\e)=Y(t_\e; u_0(x_0))\ ,$$
and
$$\dot{\beta}=\beta(1-\beta)(\beta-\lambda_\infty-\e)\ , \quad t\ge t_\varepsilon\ , \quad \beta(t_\e)=Y(t_\e; u_0(x_0))\ .$$
Since $x_0 \in \Omega_+(0)=\Omega_+(t_\e)$, we have $\alpha(t_\e)=\beta(t_\e)>1$ which implies that 
$$\alpha(t)>1, \quad \beta(t) >1\mbox{~~for all~~} t \ge t_\e,$$
and thus
\begin{equation}\label{eq:pdfr}
\lim_{t \to \infty}\alpha(t)= \lambda_\infty-\e, \quad \quad \lim_{t \to \infty}\beta(t)=\lambda_\infty+\e.
\end{equation}
Let $\mathcal L$ be defined by \eqref{definition:operator:L}; then $\mathcal L(Y)(t)=0$ for all $t \ge 0$. Note that, for $t>t_\varepsilon$, 
$$\mathcal L(\alpha)(t)=-\alpha(t)(1-\alpha(t))(\lambda_\infty-\e-\lambda(t)) \le 0$$
and
$$\mathcal L(\beta)(t)=-\beta(t)(1-\beta(t))(\lambda_\infty+\e-\lambda(t)) \ge 0\ .$$
Since $\alpha(t_\e)=Y(t_\e)=\beta(t_\e)$ it follows from Proposition~\ref{prop-sosanh} that
$$\alpha(t) \le Y(t) \le \beta(t) \mbox{~~for all~~}t \ge t_\e\ .$$
Hence, in view of \eqref{eq:pdfr},
$$\lambda_\infty-2\e \le Y(t) \le \lambda_\infty+2\e \mbox{~~for all~~}t \mbox{~~large enough},$$
which completes the proof of (b). Recalling \eqref{problem:ODE:22:6:3:14} we can then identify the unique element $\varphi$ of $\omega(u_0)$ which is given by 
$$\varphi=\chi_{\Omega_1(0)}+\lambda_\infty \chi_{\Omega_+(0)}.$$
By the mass conservation property \eqref{massconservation}, we have
$$|\Omega_1(0)|+\lambda_\infty |\Omega_+(0)|=\int_\Omega u_0(x)\ dx \ ,$$
which characterizes the value of $\lambda_\infty$.
\end{proof}

\subsection{$\omega$-limit set when $\bf (H_3)$ holds} \label{subsec2}

The following result is similar to Theorem~\ref{thm:forH1}. We omit its proof.
\begin{theorem} 
Suppose $\bf (H_3)$. Then the unique element $\varphi$ of $\omega(u_0)$ is given by 
$$\varphi=\lambda_\infty \chi_{\Omega_-(0)},$$
where $\Omega_-(0):=\{x \in \Omega: u_0(x)<0\}$ and $\lambda_\infty$ is defined by
$$
\lambda_\infty |\Omega_-(0)|=\int_\Omega u_0(x)\ dx.
$$
\end{theorem}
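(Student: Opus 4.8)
The plan is to mirror the proof of Theorem~\ref{thm:forH1} step by step, replacing the invariant region $\{Y>1\}$ by $\{Y<0\}$ and the positive stable equilibrium by the negative one relevant here. First I would record the analogue of Lemma~\ref{lem:for:H1}: setting $\Omega_0(t):=\{x\in\Omega: Y(t;u_0(x))=0\}$ and $\Omega_-(t):=\{x\in\Omega: Y(t;u_0(x))<0\}$, the fact that $Y(t;0)=0$ solves $(ODE)$, combined with uniqueness, yields $\Omega_0(t)=\Omega_0(0)$ and $\Omega_-(t)=\Omega_-(0)$ for all $t\ge0$, with $\Omega_0(0)=\{u_0=0\}$ and $\Omega_-(0)=\{u_0<0\}$. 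Since $\mathbf{(H_3)}$ forces $u_0\le0$ a.e., these two sets partition $\Omega$ up to a null set.

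Next I would establish the two pointwise limits. On $\Omega_0(0)$ there is nothing to prove, since $Y(t;u_0(x))=0\to0$. The substance is the claim that $Y(t;u_0(x))\to\lambda_\infty$ for every $x\in\Omega_-(0)$, where $\lambda_\infty<0$ by Lemma~\ref{lem:lambda:infty}(ii). Fixing such an $x_0$ and $\e>0$ small enough that $\lambda_\infty+\e<0$, I would choose $t_\e$ with $\lambda(t)\in[\lambda_\infty-\e,\lambda_\infty+\e]$ for $t\ge t_\e$, and introduce the autonomous comparison solutions $\alpha,\beta$ of $\dot\alpha=\alpha(1-\alpha)(\alpha-\lambda_\infty+\e)$ and $\dot\beta=\beta(1-\beta)(\beta-\lambda_\infty-\e)$ with $\alpha(t_\e)=\beta(t_\e)=Y(t_\e;u_0(x_0))<0$ (the last inequality because $x_0\in\Omega_-(0)=\Omega_-(t_\e)$). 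A phase-line analysis of $\dot Z=Z(1-Z)(Z-c)$ for $c<0$ shows that $(-\infty,0)$ is invariant and that the equilibrium $Z=c$ attracts every negative datum; hence $\alpha(t),\beta(t)<0$ for all $t\ge t_\e$, with $\alpha(t)\to\lambda_\infty-\e$ and $\beta(t)\to\lambda_\infty+\e$.

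The comparison itself is then structurally identical to the $\mathbf{(H_1)}$ case. With $\mathcal L$ as in \eqref{definition:operator:L} one computes $\mathcal L(\alpha)(t)=-\alpha(t)(1-\alpha(t))(\lambda_\infty-\e-\lambda(t))$ and $\mathcal L(\beta)(t)=-\beta(t)(1-\beta(t))(\lambda_\infty+\e-\lambda(t))$; the key observation is that $\alpha(1-\alpha)$ and $\beta(1-\beta)$ are negative precisely because $\alpha,\beta<0$ (exactly as they are negative in the $\mathbf{(H_1)}$ regime because $\alpha,\beta>1$), so the sign bookkeeping is unchanged and $\mathcal L(\alpha)\le0=\mathcal L(Y)\le\mathcal L(\beta)$ on $[t_\e,\infty)$. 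Proposition~\ref{prop-sosanh} then gives $\alpha(t)\le Y(t;u_0(x_0))\le\beta(t)$, and letting $t\to\infty$ followed by $\e\to0$ forces $Y(t;u_0(x_0))\to\lambda_\infty$.

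Finally I would assemble the $\omega$-limit. By \eqref{problem:ODE:22:6:3:14} and the two pointwise limits, $u(t)\to0$ a.e. on $\Omega_0(0)$ and $u(t)\to\lambda_\infty$ a.e. on $\Omega_-(0)$; since $\omega(u_0)$ is a singleton by Theorem~\ref{thm:oml}(iii) and $\Omega=\Omega_0(0)\cup\Omega_-(0)$ up to a null set, the unique element must be $\varphi=\lambda_\infty\chi_{\Omega_-(0)}$, the value of $\lambda_\infty$ being pinned down by the mass conservation \eqref{massconservation}, namely $\lambda_\infty|\Omega_-(0)|=\int_\Omega u_0\,dx$ (consistent with $\lambda_\infty<0$ since $\int_\Omega u_0\,dx<0$ and $|\Omega_-(0)|>0$). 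The only point needing genuine care is the phase-line step guaranteeing invariance of $(-\infty,0)$ and convergence of $\alpha,\beta$ to $\lambda_\infty\mp\e$; once that is in place, everything else transcribes directly from Theorem~\ref{thm:forH1}, which is exactly why the authors felt entitled to omit the proof.
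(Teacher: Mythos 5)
Your proposal is correct and is precisely the transcription of the proof of Theorem~\ref{thm:forH1} that the authors had in mind when they wrote that the proof is ``similar'' and omitted it: the invariance of $\{Y=0\}$ and $\{Y<0\}$ via uniqueness for $(ODE)$, the comparison with the autonomous equations with $\lambda$ frozen at $\lambda_\infty\mp\e$, and the observation that $\alpha(1-\alpha)<0$ on the negative half-line plays exactly the role that it plays on $(1,\infty)$ in the $\bf(H_1)$ case, so Proposition~\ref{prop-sosanh} applies with the same signs. Your phase-line justification that $(-\infty,0)$ is invariant and that $Z=c$ attracts all negative data when $c<0$ is the one genuinely new detail, and it is correct.
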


\subsection{$\omega$-limit set when $\bf (H_2)$ holds} \label{subsec3}

We finally turn to the case where $u_0$ satisfies $\bf (H_2)$. As already mentioned, the results to follow are not as precise as in the other cases but still shed some light on the dynamics of $(P)$. 

\begin{lemma} \label{lem:forH2}
Assume that $\bf (H_2)$ holds and that 
$$
l_g := \displaystyle\lim_{t \to \infty} \int_\Omega g(u(x,t))\ dx \neq 0\ .
$$ 
Then 
\begin{enumerate}[label=\emph{(\roman*)}]
\item There is $\lambda_\infty \in [0, 1]$ such that $\lambda(t)\to \lambda_\infty$ as $t \to \infty$.
\item For all $s \in [0, 1]$, there is $Y_\infty(s)\in \{0,\lambda_\infty,1\}$ such that $Y(t;s)\to Y_\infty(s)$ as $t \to \infty$.
\item If $\lambda_\infty\in (0,1)$, then the set $Y_\infty^{-1}(\lambda_\infty)$ contains at most one element.
\end{enumerate}
\end{lemma}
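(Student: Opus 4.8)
The plan is to establish the three assertions in turn, using for (i) and (ii) the convergence and comparison results already available and for (iii) a linearization argument at the unstable equilibrium $\lambda_\infty$. For (i), recall from \eqref{lambda} that $\lambda(t)$ is the quotient of $\int_\Omega f(u(x,t))\,dx$ by $\int_\Omega g(u(x,t))\,dx$. Corollary~\ref{cor:lim:nume:denominateur} ensures that both limits $l_f$ and $l_g$ exist, and since $l_g\neq0$ by hypothesis, passing to the limit yields $\lambda(t)\to\lambda_\infty:=l_f/l_g$; that $\lambda_\infty\in[0,1]$ is then immediate from Corollary~\ref{bounds:lambda(t)}.

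For (ii), I would fix $s\in[0,1]$, write $Y(t):=Y(t;s)$, which remains in $[0,1]$ by the invariance of $I_2$, and set $Y_*:=\liminf_{t\to\infty}Y(t)$ and $Y^*:=\limsup_{t\to\infty}Y(t)$. Following the scheme of Theorem~\ref{thm:forH1}, given small $\e>0$ I would choose $t_\e$ with $\lambda(t)\in[\lambda_\infty-\e,\lambda_\infty+\e]$ for $t\ge t_\e$, and compare $Y$ on $[t_0,\infty)$ with the solutions $\alpha$ and $\beta$ of the autonomous bistable equations obtained by replacing $\lambda(t)$ in $(ODE)$ by $\lambda_\infty-\e$ and $\lambda_\infty+\e$ respectively, started from $\alpha(t_0)=\beta(t_0)=Y(t_0)$ at a time $t_0\ge t_\e$ to be chosen. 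Since $\alpha,\beta\in[0,1]$, the identities $\mathcal L(\alpha)(t)=\alpha(1-\alpha)\big(\lambda(t)-\lambda_\infty+\e\big)\ge0$ and $\mathcal L(\beta)(t)=\beta(1-\beta)\big(\lambda(t)-\lambda_\infty-\e\big)\le0$ hold for $t\ge t_\e$, so Proposition~\ref{prop-sosanh} gives $\beta\le Y\le\alpha$. Exactly as for \eqref{eq:pdfr}, elementary phase-line analysis shows that if $Y(t_0)>\lambda_\infty+\e$ for some $t_0\ge t_\e$ then $\beta\to1$, whence $Y\to1$, while if $Y(t_0)<\lambda_\infty-\e$ then $\alpha\to0$, whence $Y\to0$. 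Therefore $Y^*>\lambda_\infty$ forces $Y\to1$ and $Y_*<\lambda_\infty$ forces $Y\to0$, and in the only remaining case $\lambda_\infty\le Y_*\le Y^*\le\lambda_\infty$ gives $Y\to\lambda_\infty$. This yields $Y_\infty(s)\in\{0,\lambda_\infty,1\}$, the boundary values $\lambda_\infty\in\{0,1\}$ being covered automatically because then one of the two escape alternatives is vacuous ($Y^*\le1$ when $\lambda_\infty=1$, $Y_*\ge0$ when $\lambda_\infty=0$).

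For (iii), suppose $\lambda_\infty\in(0,1)$ and, for contradiction, that $s_1<s_2$ both belong to $Y_\infty^{-1}(\lambda_\infty)$. Uniqueness of solutions to $(ODE)$ prevents trajectories from crossing, so $w(t):=Y(t;s_2)-Y(t;s_1)>0$ for all $t$. Writing the right-hand side of $(ODE)$ as $h(Y,t):=f(Y)-\lambda(t)g(Y)$, the mean value theorem gives $\dot w(t)=\big(f'(\xi(t))-\lambda(t)g'(\xi(t))\big)\,w(t)$ with $\xi(t)$ between $Y(t;s_1)$ and $Y(t;s_2)$. Since $Y(t;s_i)\to\lambda_\infty$ and $\lambda(t)\to\lambda_\infty$, a direct computation shows $f'(\xi(t))-\lambda(t)g'(\xi(t))\to\lambda_\infty(1-\lambda_\infty)>0$, so there exist $t_1$ and $\delta>0$ with $\dot w\ge\delta w$ on $[t_1,\infty)$. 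Gronwall's inequality then yields $w(t)\ge w(t_1)e^{\delta(t-t_1)}\to\infty$, contradicting $w\le1$; hence $Y_\infty^{-1}(\lambda_\infty)$ contains at most one point.

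The step I expect to be the main obstacle is the barrier argument in (ii): one must verify the signs of $\mathcal L(\alpha)$ and $\mathcal L(\beta)$, ensure that the comparison solutions stay in $[0,1]$ so that $\alpha(1-\alpha)$ and $\beta(1-\beta)$ keep the sign used above, and --- crucially --- start the comparison at a time $t_0\ge t_\e$ at which $Y$ actually exits the band $[\lambda_\infty-\e,\lambda_\infty+\e]$ rather than at $t_\e$ itself, since otherwise the two barriers do not pin down the limit.
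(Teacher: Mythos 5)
Your proof is correct and follows essentially the same route as the paper's: (i) is the same quotient-of-limits observation, (ii) rests on the same dichotomy (either the trajectory eventually leaves a band $[\lambda_\infty-\e,\lambda_\infty+\e]$ and is then driven monotonically to $0$ or $1$, or it converges to $\lambda_\infty$), and (iii) uses the same equation for the difference of two solutions with limiting coefficient $\lambda_\infty(1-\lambda_\infty)>0$. The only cosmetic differences are that the paper runs (ii) by a direct sign argument on $\dot Y$ instead of your explicit barriers $\alpha,\beta$, and concludes (iii) from eventual monotonicity of the difference rather than from Gronwall-type exponential growth.
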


\begin{proof}
(i) Assertion (i) directly follows from the assumption $l_g\ne 0$ and Corollaries~\ref{bounds:lambda(t)} and~\ref{cor:lim:nume:denominateur}.

\bigskip

\noindent (ii) Let $s \in [0, 1]$ and set $Y(t):=Y(t;s)$ for $t\ge 0$. Recall that $(ODE)$ implies that $Y(t)\in (0,1)$ for all $t\ge 0$. Given $\varepsilon\in (0,1)$ there is $t_\varepsilon>0$ such that
\begin{equation}
\lambda_\infty -\varepsilon \le \lambda(t) \le \lambda_\infty+\varepsilon\ , \quad t\ge t_\varepsilon\ .\label{zz1}
\end{equation}

\medskip

\noindent\textbf{Case 1.} Either there are $\varepsilon\in (0,1)$ and $t_0\ge t_\varepsilon$ such that $Y(t_0)\not\in [\lambda_\infty-\varepsilon,\lambda_\infty + \varepsilon]$. 

\medskip

\noindent\textbf{Case~1.1.} If $Y(t_0)<\lambda_\infty-\varepsilon$, then $(ODE)$ and \eqref{zz1} guarantee that $\dot{Y}(t_0)<0$ so that
$$
\tau := \sup\{ t\ge t_0\ :\ Y(t)<\lambda_\infty - \varepsilon\}>t_0\ .
$$
Clearly $\dot{Y}(t)<0$ for $t\in [t_0,\tau)$ from which we readily conclude that $\tau=\infty$. Therefore $Y(t)<\lambda_\infty-\varepsilon$ and $\dot{Y}(t)<0$ for all $t\in [t_0,\infty)$ and we infer from $(ODE)$ and \eqref{zz1} that
$$
\dot{Y}(t) \le Y(t) (Y(t_0)-\lambda_\infty+\varepsilon)\ , \qquad t\ge t_0\ .
$$
Since $Y(t_0)-\lambda_\infty+\varepsilon<0$, we conclude that $Y(t)$ decays exponentially fast to zero and thus that $Y_\infty(s)=0$. 

\medskip

\noindent\textbf{Case~1.2.} If $Y(t_0)>\lambda_\infty+\varepsilon$ then $\dot{Y}(t_0)>0$ by $(ODE)$ and \eqref{zz1} and a similar argument entails that $Y(t)\to 1$ as $t\to 1$. Thus $Y_\infty(s)=1$ in that case.

\medskip

\noindent\textbf{Case~2.} Or $Y(t)\in [\lambda_\infty-\varepsilon,\lambda_\infty+\varepsilon]$ for all $(\varepsilon,t)\in (0,1)\times (t_\varepsilon,\infty)$, which means that $Y(t)\to \lambda_\infty$ as $t\to\infty$ and completes the proof of (ii).

\medskip

\noindent (iii) Assume for contradiction that there are $0<s_1<s_2<1$ such that $Y_\infty(s_1)=Y_\infty(s_2)=\lambda_\infty$. We set $Y_i(t):= Y(t;s_i)$ for $t\ge 0$ and $i=1,2$ and $Z:=Y_2-Y_1$ and notice that  
\begin{equation}\label{eq:Z(t):property}
\lim_{t \to \infty} Z(t)=0, \quad  Z(t) > 0 \mbox{~~for all~~} t \ge 0\ .
\end{equation}
We further deduce from $(ODE)$ that $Z$ solves
$$
\dot{Z} = \left[ (1+\lambda)(Y_1+Y_2) - \lambda - Y_1^2 - Y_1 Y_2 - Y_2^2 \right] Z\ ,
$$
while assertion~(i) of Lemma~\ref{lem:forH2} entails that
$$
\lim_{t\to\infty} \left[ (1+\lambda)(Y_1+Y_2) - \lambda - Y_1^2 - Y_1 Y_2 - Y_2^2 \right](t) = \lambda_\infty (1-\lambda_\infty) > 0\ .
$$
Owing to \eqref{eq:Z(t):property}, we realize that $\dot{Z}(t)\ge 0$ for $t$ large enough, which contradicts \eqref{eq:Z(t):property}.
\end{proof}

\begin{prop}\label{prop:forH2}
Suppose that $\bf (H_2)$ holds and that 
\begin{equation}\label{eq:hypotheses:forH2}
 |\{x \in \Omega: u_0(x)=s \}|=0 \mbox{~~for all~~} s \in (0,1).
\end{equation}
Then 
$$
l_g := \displaystyle\lim_{t \to \infty} \int_\Omega g(u(x,t))\ dx =0 \quad\text{ and }\quad l_f := \displaystyle\lim_{t \to \infty} \int_\Omega f(u(x,t))\ dx =0\ .
$$
\end{prop}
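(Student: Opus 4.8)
\emph{Strategy and reduction.} First I note that both limits $l_g$ and $l_f$ exist by Corollary~\ref{cor:lim:nume:denominateur}, and that $\bf (H_2)$ forces $0\le u(x,t)\le 1$ by Proposition~\ref{thm:existence:boundedness}, so that $g(u)=u(1-u)\ge 0$ and hence $l_g\ge 0$. Moreover, since $f(z)=z\,g(z)$ and $0\le z\le 1$ give $0\le f(z)\le g(z)$, I have $0\le l_f\le l_g$ in the limit; consequently it suffices to prove that $l_g=0$, the identity $l_f=0$ then following at once. I would therefore concentrate on $l_g$ and argue by contradiction, assuming $l_g\neq 0$ so as to bring Lemma~\ref{lem:forH2} into play.

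\emph{Passing to the limit in the nonlocal term.} Under the assumption $l_g\neq 0$, Lemma~\ref{lem:forH2} provides $\lambda_\infty\in[0,1]$ with $\lambda(t)\to\lambda_\infty$ and, for every $s\in[0,1]$, a limit $Y_\infty(s)\in\{0,\lambda_\infty,1\}$ with $Y(t;s)\to Y_\infty(s)$. Recalling from \eqref{problem:ODE:22:6:3:14} that $u(x,t)=Y(t;u_0(x))$, I would deduce that $u(x,t)\to Y_\infty(u_0(x))$ for a.e.\ $x\in\Omega$; since $g$ is continuous and $0\le u\le 1$, the dominated convergence theorem yields
$$
l_g=\int_\Omega g\big(Y_\infty(u_0(x))\big)\,dx\ .
$$
Because $g(0)=g(1)=0$, only the set on which $Y_\infty(u_0(x))=\lambda_\infty$ contributes, so that
$$
l_g=\lambda_\infty(1-\lambda_\infty)\,\big|\{x\in\Omega:\ Y_\infty(u_0(x))=\lambda_\infty\}\big|\ .
$$

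\emph{Case analysis and the role of the hypothesis.} If $\lambda_\infty\in\{0,1\}$, then $\lambda_\infty(1-\lambda_\infty)=0$ and $l_g=0$, contradicting $l_g\neq 0$. If instead $\lambda_\infty\in(0,1)$, I would invoke assertion~(iii) of Lemma~\ref{lem:forH2}, which ensures that $Y_\infty^{-1}(\lambda_\infty)$ is at most a singleton $\{s^\ast\}$. Since $Y(t;0)\equiv 0$ and $Y(t;1)\equiv 1$ force $Y_\infty(0)=0$ and $Y_\infty(1)=1$, both distinct from $\lambda_\infty$, the point $s^\ast$ must lie in $(0,1)$; hence $\{x:\ Y_\infty(u_0(x))=\lambda_\infty\}=\{x:\ u_0(x)=s^\ast\}$, whose measure vanishes by the non-degeneracy hypothesis \eqref{eq:hypotheses:forH2}. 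Again $l_g=0$, a contradiction. In all cases $l_g=0$, and then $l_f=0$ follows from $0\le l_f\le l_g$ noted above.

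\emph{Main obstacle.} The delicate point is the step ruling out a positive-measure plateau converging to $\lambda_\infty$: it hinges on combining assertion~(iii) of Lemma~\ref{lem:forH2} (uniqueness of the preimage of $\lambda_\infty$ under $Y_\infty$) with the non-degeneracy \eqref{eq:hypotheses:forH2} of $u_0$, the endpoints $s=0,1$ being excluded by the constancy of $Y(\cdot;0)$ and $Y(\cdot;1)$. Once this measure-zero fact is secured, the remaining ingredients — the bound $0\le u\le 1$, dominated convergence, and the elementary inequality $0\le f\le g$ on $[0,1]$ — are routine.
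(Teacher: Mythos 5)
Your proposal is correct and follows essentially the same route as the paper: contradiction from $l_g\neq 0$, invocation of Lemma~\ref{lem:forH2} to get $\lambda_\infty$ and the pointwise limits $Y_\infty(u_0(x))$, the case split on $\lambda_\infty\in\{0,1\}$ versus $\lambda_\infty\in(0,1)$ combined with hypothesis \eqref{eq:hypotheses:forH2}, dominated convergence to conclude $l_g=0$, and finally $0\le f\le g$ on $[0,1]$ to get $l_f=0$. Your explicit exclusion of the endpoints $s^\ast=0,1$ (via $Y(\cdot;0)\equiv 0$, $Y(\cdot;1)\equiv 1$) is a small point the paper leaves implicit, but it is needed since \eqref{eq:hypotheses:forH2} only covers $s\in(0,1)$, so it is a welcome addition rather than a deviation.
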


\begin{proof}
Assume for contradiction that
\begin{equation}\label{assump:contradiction}
l_g = \displaystyle\lim_{t \to \infty} \int_\Omega g(u(x,t))\ dx \neq 0\ .
\end{equation}
By Lemma~\ref{lem:forH2}\,(i), there is $\lambda_\infty\in [0,1]$ such that $\lambda(t) \longrightarrow \lambda_\infty$ as $t \to \infty$.

If $\lambda_\infty=0$ or $\lambda_\infty=1$, then it follows from Lemma~\ref{lem:forH2}\,(ii) that $Y_\infty(s)\in \{0,1\}$ for $s \in [0, 1]$. Consequently, $u(1-u)(x,t) = Y(t;u_0(x))(1-Y(t;u_0(x)) \to 0$ as $t\to\infty$ for a.e. $x\in\Omega$ which implies that $l_g=0$ and contradicts \eqref{assump:contradiction}.

If $\lambda_\infty \in (0,1)$ then $Y_\infty^{-1}(\lambda_\infty)$ contains at most one element by Lemma~\ref{lem:forH2}\,(iii) which, together with \eqref{eq:hypotheses:forH2}, implies that $Y_\infty(u_0(x)) \in \{0,1\}$ for a.e. $x \in \Omega$. Consequently $u(1-u)(x,t) = Y(t;u_0(x))(1-Y(t;u_0(x)) \to 0$ as $t\to\infty$ for a.e. $x\in\Omega$ which implies that $l_g=0$ and again contradicts \eqref{assump:contradiction}.

Consequently $l_g=0$ and, since $0\le f(u(x,t))\le g(u(x,t))$ for a.e. $x\in\Omega$ and all $t\ge 0$ by Proposition~\ref{thm:existence:boundedness}\,(ii), we readily obtain that $l_f=0$.
\end{proof}

\begin{rem}
Under the assumptions of Proposition~\ref{prop:forH2} it is unclear whether $\lambda(t)$ has a limit as $t\to\infty$.
\end{rem}

\end{document}